\documentclass{amsart}
\usepackage{amsfonts}
\usepackage{amsmath}
\usepackage{amssymb}
\usepackage{graphicx, comment}
\usepackage{float}

\usepackage{tikz}
\usetikzlibrary{decorations.markings}
\usetikzlibrary{shapes}
\tikzstyle{vertex}=[circle, draw, inner sep=0pt, minimum size=6pt]

\newcommand{\vertex}{\node[vertex]}
\newtheorem{definition}{\bf Definition}

\newtheorem{lemma}{\bf Lemma}
\newtheorem{theorem}{\bf Theorem}

\newtheorem{remark}{\bf Remark}
\newtheorem{corollary}{\bf Corollary}

\setcounter{MaxMatrixCols}{20}
\begin{document}

\title[Graphs with characteristic-dependent well-covered dimension]{A note on graphs with characteristic-dependent well-covered dimension}
\author{J. Burdick}
\address{Humboldt State University. Arcata, CA 95521.}
\email{jab320@humboldt.edu}
\author{O. Vega}
\address{California State University, Fresno. Fresno, CA 93740.}
\email{ovega@csufresno.edu}

\thanks{Research partially supported by NSF (DMS \#1156273) during the REU program at California State University, Fresno.}
\subjclass[2010]{Primary 05C69; Secondary 05C50, 05C22}
\keywords{Well-covered dimension, maximal independent sets}

\begin{abstract}
We investigate graphs that have characteristic-dependent well-covered dimension and show how more of these graphs can be constructed from known ones.
\end{abstract}

\maketitle

Our objective is to find the smallest graphs in which a given parameter of the graph depends on the characteristic of the field used to compute such parameter. In order to do this we will first introduce the reader to all necessary concepts, and notation, to later provide examples of what we believe are the smallest graphs with this property. 

\section{Preliminaries}

In this paper, all graphs will be finite, simple and undirected. As it is customary, for any given graph $G$, we will use $V(G)$ to denote the set of vertices of $G$ and $E(G)$ to denote the set of edges of $G$. The cardinality of $V(G)$ is called the \textbf{order} of $G$. When two vertices are connected by an edge we will say they are \textbf{adjacent}, we will also say that either vertex is \textbf{incident} with the edge connecting them. We refer the reader to Bondy and Murty's book \cite{BM} for a more complete introduction to concepts in graph theory. 

\begin{definition}
Let $G$ be a graph. $M$ is an \textbf{independent set} of $G$ if $M \subseteq V(G)$ and no pair of vertices in $M$ are adjacent.  An independent set $M$ of $G$ is said to be \textbf{maximal} if it is not properly contained in any other independent set of $G$.
\end{definition}

\begin{definition}
Let $\mathbb{F}$ be a field and $G$ a graph. A \textbf{well-covered weighting} of $G$ is a function $w : V(G) \to \mathbb{F}$ such that \ $\sum\nolimits_{v \in M} w(v)$ is constant for all maximal independent sets $M$ of $G$. 
\end{definition}

There is an abundant amount of research about graphs having all maximal independent sets with the same cardinality; these graphs are said to be well-covered (see  Plummer \cite{P70} and  \cite{P93}). Note that a well-covered graph may be also defined as a graph that admits all constant functions as well-covered weightings. Hence, the study of well-covered weightings is an attempt to  generalize the study of well-covered graphs. 

\begin{definition}
Let $G$ be a graph and $\mathbb{F}$ a field. The $\mathbb{F}$-vector space of all well-covered weightings of $G$ is called the \textbf{well-covered space} of $G$ (relative to $\mathbb{F}$). \\
The \textbf{well-covered dimension} of $G$ over $\mathbb{F}$ is the dimension of the well-covered  vector space of $G$. We will denote this number by $wcdim(G,\mathbb{F})$.
\end{definition}

The concept of the well-covered dimension of a graph was first studied by Caro, Ellingham, Ramey, and Yuster in \cite{CER} and \cite{CY}, and was later developed further by Brown and Nowakowski \cite{BN} in 2005. Some more work has been done on this topic recently; see \cite{BKMUV} and \cite{HOV} for new results only involving graph theory, and see  \cite{C} for an attempt to apply the well-covered dimension of a graph to the study of point-line configurations.

The technique we will use to compute the well-covered dimension of a given graph is fairly natural. Assuming that we know that $M_1, M_2,..., M_n$ are all the maximal independent sets of $G$, then the well-covered weightings of $G$ are given by the
solutions to the linear system of equations:
\begin{align*}
\sum\nolimits_{v \in M_1} x_v &= \sum\nolimits_{v \in M_2} x_v\\
\sum\nolimits_{v \in M_1} x_v &= \sum\nolimits_{v \in M_3} x_v\\
\vdots \\
\sum\nolimits_{v \in M_1} x_v &= \sum\nolimits_{v \in M_n} x_v
\end{align*} 
where $x_v$ denotes the weight of $v\in G(V)$ under a weighting. Since this system may be represented by using a matrix, which we call the \textbf{associated matrix} of $G$ and denote it $A_G$, the well-covered dimension of $G$ may be obtained by
\[
wcdim(G,\mathbb{F}) =  |V(G)| - rank(A_G).
\]

Note that depending on the characteristic of the field $\mathbb{F}$, $rank(A_G)$ could vary. So, for some graphs the well-covered dimension is characteristic-dependent. This is the property we intend to study in this article.

Not much is known about why some graphs have this property. What it is known is that for any given $p$ prime there are graphs that change their well-covered dimension for $char(\mathbb{F})=p$ (see \cite{BN}). In  \cite{BKMUV} it is shown that crown graphs may also be used to construct examples of graphs that change their well-covered dimension for a given $char(\mathbb{F})=p$. Moreover, these graphs are smaller by order than those given in \cite{BN}.

\section{Smallest graphs with characteristic-dependent well-covered dimension}\label{secexamples}

It was already mentioned that examples of graphs with characteristic-dependent well-covered dimension are known. Our focus is on, given a prime number $p$, to  find the smallest (by order) graph such that its well-covered dimension changes for fields with $char(\mathbb{F})=p$. 

We wrote a program in  Sage \cite{Sage} to calculate the well-covered dimension of all graphs of order less or equal to $10$.  Using this program, we were able to perform calculations on many thousands of graphs in order to find the smallest graphs with well-covered dimension that changes for characteristics $2$, $3$, and $5$, respectively. They are:
\[
\begin{array}{llll}
\raisebox{.5in}{$G_7$:} & 
\hspace{.2in} \begin{tikzpicture}[x=.25in, y=.25in]
\vertex (v1) at (-1,1) []{};
\vertex (v6) at (0,1) []{};
\vertex (v2) at (1,1) []{};
\vertex (v3) at (-1,-1) []{};
\vertex (v7) at (0,-1) []{};
\vertex (v4) at (1,-1) []{};
\vertex (v5) at (0,2.5) []{};
\path 
(v1) edge (v6)
(v6) edge (v2)
(v6) edge (v7)
(v2) edge (v4)
(v1) edge (v3)
(v3) edge (v7)
(v7) edge (v4)
(v1) edge (v5)
(v2) edge (v5)
(v3) edge (v5)
(v4) edge (v5);
\end{tikzpicture} 
& \ \ \ & 
\raisebox{.5in}{$wcdim(G_7, \mathbb{F}) = \left\{ 
\begin{array}{ll}
2 & \text{if $char(\mathbb{F}) \neq 2$}\\ 
3 &  \text{if $char(\mathbb{F}) = 2$}
\end{array}
\right.$} \\
\raisebox{.5in}{$G_8$:} & 
\hspace{.2in} \begin{tikzpicture}[x=.25in, y=.25in]
\vertex (v1) at (-1,2) []{};
\vertex (v6) at (2,0) []{};
\vertex (v2) at (1,2) []{};
\vertex (v3) at (-1,-1) []{};
\vertex (v4) at (1,-1) []{};
\vertex (v5) at (0,2) []{};
\vertex (v7) at (1,1) []{};
\vertex (v8) at (0,0) []{};
\path 
(v3) edge (v1)
(v3) edge (v2)
(v3) edge (v4)
(v5) edge (v1)
(v5) edge (v2)
(v5) edge (v4)
(v6) edge (v4)
(v6) edge (v3)
(v6) edge (v2)
(v7) edge (v2)
(v7) edge (v4)
(v7) edge (v5)
(v1) edge (v8)
(v6) edge (v8)
(v7) edge (v8);
\end{tikzpicture} 
& \ \ \ & 
\raisebox{.5in}{$wcdim(G_8, \mathbb{F}) = \left\{ 
\begin{array}{ll}
1 & \text{if $char(\mathbb{F}) \neq 3$}\\ 
2 &  \text{if $char(\mathbb{F}) = 3$}
\end{array}
\right.$}\\
\raisebox{.5in}{$G_{10}$:} & 
\hspace{.2in} \begin{tikzpicture}[x=.25in, y=.25in]
\vertex (v1) at (-1,1) []{};
\vertex (v6) at (1,1) []{};
\vertex (v2) at (0,2) []{};
\vertex (v3) at (-1,2) []{};
\vertex (v8) at (2,-1) []{};
\vertex (v5) at (0,1) []{};
\vertex (v7) at (1,0) []{};
\vertex (v4) at (0,0) []{};
\vertex (v9) at (2,2) []{};
\vertex (v10) at (-1,-1) []{};
\path 
(v1) edge (v3)
(v2) edge (v3)
(v1) edge (v4)
(v3) edge (v5)
(v4) edge (v5)
(v2) edge (v6)
(v5) edge (v6)
(v4) edge (v7)
(v6) edge (v7)
(v4) edge (v8)
(v2) edge (v9)
(v7) edge (v9)
(v8) edge (v9)
(v1) edge (v10)
(v2) edge (v10)
(v8) edge (v10);
\end{tikzpicture} 
& \ \ \ & 
\raisebox{.5in}{$wcdim(G_{10}, \mathbb{F}) = \left\{ 
\begin{array}{ll}
0 & \text{if $char(\mathbb{F}) \neq 5$}\\ 
1 &  \text{if $char(\mathbb{F}) = 5$}
\end{array}
\right.$}
\end{array}
\]

We also searched for graphs with order $11$ that had characteristic-dependent well-covered dimension. Based upon our exponentially-increasing search times for graphs of order $7$ through $10$, we estimated that this search could likely take over a week to complete, and so we only obtained partial results for graphs on eleven vertices. We would like to mention that the graphs listed above are not the only ones having characteristic-dependent well-covered dimension. In fact, the analysis of \emph{all} the graphs obtained in our search helped us to conjecture many of the results in the following section. 

\section{Construction of graphs with characteristic-dependent well-covered dimension}

Before getting to the specific constructions, and their proofs, we need to set some notation.

Let $G$ be a graph. For $v\in V(G)$ we define:
\begin{align*}
N(v) & =\{w\in V(G); \  vw \in E(G) \} \\
\overline{N(v)} & =\{v\}\cup N(v) \\
P(v) & = \{ e\in E(G); \ v \ \text{is incident with} \ e \}.
\end{align*}
Also, if $W\subseteq V(G)$ we denote by $G[W]$ the subgraph of $G$ induced by $W$

\begin{lemma}\label{lemlargergraphs}
Let $G$ be a graph.
\begin{enumerate}
\item Assume that $G$ contains a clique $C$ such that $\overline{N(v)}=\overline{N(w)}$, for all $v, w\in C$. Then, $wcdim(G) =wcdim(G')$, where $G'$ is the graph obtained after contracting all the edges in $G[C]$. \\
\item For any given $v\in G$ and $n\in \mathbb{N}$ define $G_{v,n}$ as the graph with 
\begin{align*}
V(G_{v,n}) & =(V(G)\setminus\{v\}) \cup V(K_n)\\
E(G_{v,n}) & = (E(G)\setminus P(v)) \cup \{ab ; \ a\in N(v), \ b\in K_n\}.
\end{align*}
Then, $wcdim(G) =wcdim(G_{v,n})$.
\end{enumerate}
\end{lemma}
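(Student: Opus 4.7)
The plan is to analyze the maximal independent sets (MIS) of $G$ using the twin structure on $C$, derive a forced equality among the weights of the vertices of $C$, and then match the resulting linear system to the one that defines the well-covered space of $G'$. Part (2) will follow immediately from part (1).

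For part (1), write $C = \{v_1, \ldots, v_k\}$ and let $c$ denote the single vertex of $G'$ produced by the contraction. Since $C$ is a clique, any MIS $M$ of $G$ satisfies $|M \cap C| \leq 1$, and I would set up a correspondence between MIS of $G$ and MIS of $G'$ as follows: if $M \cap C = \{v_i\}$ then $(M \setminus \{v_i\}) \cup \{c\}$ is an MIS of $G'$, while if $M \cap C = \emptyset$ then $M$ itself is an MIS of $G'$. The hypothesis $\overline{N(v_i)} = \overline{N(v_j)}$ is exactly what translates both independence and (via the common dominators of the $v_i$) maximality between the two graphs. Conversely, every MIS $M'$ of $G'$ with $c \in M'$ lifts to $k$ distinct MIS of $G$, one for each choice of $v_i$ replacing $c$; and every MIS of $G'$ avoiding $c$ is already an MIS of $G$.

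Next, because any two $v_i, v_j \in C$ are twins, for any MIS $M$ of $G$ with $v_i \in M$ the set $(M \setminus \{v_i\}) \cup \{v_j\}$ is also an MIS (and at least one MIS contains each $v_i$, obtained by extending $\{v_i\}$ to a maximal independent set). Comparing the two MIS-sum equations forces $w(v_i) = w(v_j)$, so any well-covered weighting of $G$ takes a common value $x$ on all of $C$. Weightings of $G$ satisfying this equality correspond bijectively to weightings of $G'$ (set $w'(c) := x$ and $w' := w$ elsewhere), and under this correspondence the remaining MIS-sum equations of $G$ become exactly those of $G'$. Hence the well-covered spaces are isomorphic and $wcdim(G) = wcdim(G')$.

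For part (2), observe that inside $G_{v,n}$ the clique on $V(K_n)$ has the property that each of its vertices has closed neighborhood $V(K_n) \cup N(v)$, so the hypothesis of part (1) applies with $C = V(K_n)$. Contracting all edges of $G_{v,n}[K_n]$ collapses this clique to a single vertex whose neighborhood is exactly $N(v)$, which recovers $G$. Part (1) then yields $wcdim(G_{v,n}) = wcdim(G)$. The main obstacle will be the first correspondence in part (1): one must verify maximality carefully in both directions, using that a vertex dominating some $v_i$ in $G$ must dominate $c$ in $G'$ and conversely. Once that bijection is in place, the forced equality of weights on $C$ and the matching of the two linear systems are essentially bookkeeping.
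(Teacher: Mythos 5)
Your proposal is correct and follows essentially the same route as the paper: the key step in both is that the twin condition $\overline{N(v_i)}=\overline{N(v_j)}$ lets one swap vertices of $C$ within any maximal independent set, which forces well-covered weightings to be constant on $C$ and identifies the linear system of $G$ with that of $G'$. The paper's own proof states only the swap observation and immediately concludes; you supply the bijection of maximal independent sets and the matching of weightings that the paper leaves implicit.
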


\begin{proof}
(1) If $M$ is a maximal independent set of $G$ containing $v\in C$, then we can create a  new  maximal independent set of $G$ by replacing $v$ by any other element $w\in C$ and leaving all other elements in $M$ intact. Hence, when all the vertices in $G[C]$ are contracted the well-covered dimension does not change.\\
(2) This follows directly from (1), as $C=V(K_n)$ is a clique of $G_{v,n}$ satisfying the hypothesis of part (1), and $G$ is the graph obtained from $G_{v,n}$ after contracting $K_n$ to a vertex.
\end{proof}

\begin{remark}
We can now take any of the graphs  in Section \ref{secexamples} and use Lemma \ref{lemlargergraphs} to construct graphs with characteristic-dependent well-covered dimension of any order we may want. For example, using  $G_7$ we can prove that for every $n\geq 7$ there is a graph $G$ of order $n$ for which $wcdim(G, \mathbb{F})$ takes different values depending on whether or not $char(\mathbb{F}) = 2$. 
\end{remark}

We improve part of the previous remark in the following theorem.

\begin{theorem}\label{themGK2upG}
Let $G_{K_2}$ be the graph below.
\[
\hspace{.2in} \begin{tikzpicture}[x=.25in, y=.25in]
\vertex (v1) at (-1,1) [label=left:$v_1$]{};
\vertex (u) at (0,1) [label=above:$u$]{};
\vertex (v2) at (1,1) [label=right:$v_2$]{};
\vertex (v3) at (-1,-1) [label=left:${v_3}$]{};
\vertex (w) at (0,-1) [label=below:${w}$]{};
\vertex (v4) at (1,-1) [label=right:${v_4}$]{};
\path 
(v1) edge (u)
(u) edge (v2)
(u) edge (w)
(v2) edge (v4)
(v1) edge (v3)
(v3) edge (w)
(w) edge (v4);
\end{tikzpicture}
\]

Let $G$ be a non-empty graph, and $H_G$ be the disjoint union of $G_{K_2}$ and $G$ with added edges $v_ix$, for all $x \in V(G)$ and  $i=1,2,3,4$. Then, 
\[
wcdim(H_G,\mathbb{F}) = \left\{
\begin{array}{ll}
wcdim(G,\mathbb{F}) + 1 & \text{if $char(\mathbb{F}) \neq 2$} \\
wcdim(G,\mathbb{F}) + 2 & \text{if $char(\mathbb{F}) = 2$}
\end{array}
\right.
\]
\end{theorem}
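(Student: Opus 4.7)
The plan is to classify the maximal independent sets of $H_G$, convert the well-covered weighting condition to a linear system, and then extract a single characteristic-sensitive relation.

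First I would enumerate the maximal independent sets. Because each $v_i$ is joined to every vertex of $V(G)$, any maximal independent set $M$ of $H_G$ splits into two cases. If $M\cap V(G)\neq\emptyset$ then $M\cap\{v_1,v_2,v_3,v_4\}=\emptyset$, and since $u$ and $w$ have no neighbors in $V(G)$, maximality forces $M\cap V(G)$ to be a maximal independent set of $G$ and $M\cap\{u,w\}$ to be either $\{u\}$ or $\{w\}$ (not both, since $uw\in E(G_{K_2})$, and not empty, since $u$ would then be addable). If $M\cap V(G)=\emptyset$, then $M$ must contain some $v_i$ to prevent every vertex of $V(G)$ from being addable; a direct check (using that $G_{K_2}$ is the grid $P_2\square P_3$) produces exactly the four maximal independent sets $\{v_1,v_2,w\}$, $\{u,v_3,v_4\}$, $\{v_1,v_4\}$, $\{v_2,v_3\}$. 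Writing $k$ for the number of maximal independent sets of $G$, this yields $2k+4$ maximal independent sets of $H_G$.

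Next I would set up the linear system. Let $f$ be a well-covered weighting of $H_G$, set $\mu=f(u)$, $\nu=f(w)$, $\alpha_i=f(v_i)$, and let $c$ denote the common sum. Equating sums over independent sets of the form $M_G\cup\{u\}$ (for $M_G$ a maximal independent set of $G$) shows that $f|_{V(G)}$ is a well-covered weighting of $G$ with common sum $s:=c-\mu$. Pairing each such set with the corresponding $M_G\cup\{w\}$ gives $\mu=\nu$. The four equations coming from the independent sets inside $G_{K_2}$ then reduce to
\begin{align*}
\alpha_1+\alpha_2&=s,\\
\alpha_3+\alpha_4&=s,\\
\alpha_1+\alpha_4&=s+\mu,\\
\alpha_2+\alpha_3&=s+\mu.
\end{align*}

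The key step is solving this system for $\alpha_1,\alpha_2,\alpha_3,\alpha_4,\mu$. Adding the first two equations and subtracting the last two yields $2\mu=0$, the unique characteristic-sensitive relation. If $\mathrm{char}(\mathbb{F})\neq 2$, then $\mu=\nu=0$ and the remaining equations force $\alpha_3=\alpha_1$ and $\alpha_2=\alpha_4=s-\alpha_1$, leaving $\alpha_1$ as a single free parameter beyond the $wcdim(G,\mathbb{F})$ parameters describing $f|_{V(G)}$. If $\mathrm{char}(\mathbb{F})=2$, the relation $2\mu=0$ is automatic, $\mu$ becomes free, and $\alpha_2,\alpha_3,\alpha_4$ are determined by $\alpha_1,\mu,s$, giving two extra free parameters. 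The main obstacle is the first step: one must be sure the enumeration of maximal independent sets is exhaustive, since a missing set omits an equation and inflates the dimension, while a wrongly-included non-maximal set imposes a spurious equation; once the list is correct, the linear algebra is routine and the dichotomy drops out cleanly from the single relation $2\mu=0$.
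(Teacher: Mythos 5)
Your proposal is correct and follows essentially the same route as the paper: both enumerate the same $2k+4$ maximal independent sets of $H_G$, reduce to a linear system in which the restriction to $V(G)$ is a well-covered weighting of $G$, and isolate a single relation ($2\mu=0$ in your notation, the row $(0,\dots,0,2,\mathbf{0}_m)$ in the paper's Hermite normal form) as the sole source of the characteristic-$2$ dependence. Your parametric solution of the small system, together with the observation that every well-covered weighting of $G$ extends, is just a cleaner presentation of the paper's row reduction of $A_{H_G}$.
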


\begin{proof}
Let $G$ be a non-empty graph of order $m$ where $V(G) = \{x_1,\dots,x_m\}$. We represent $H_G$ as in the figure below,
\[
\hspace{.2in} \begin{tikzpicture}[x=.25in, y=.25in]
\vertex (v1) at (-1,1) [label=left:$v_1$]{};
\vertex (u) at (0,1) [label=above:$u$]{};
\vertex (v2) at (1,1) [label=right:$v_2$]{};
\vertex (v3) at (-1,-1) [label=left:${v_3}$]{};
\vertex (w) at (0,-1) [label=below:${w}$]{};
\vertex (v4) at (1,-1) [label=right:${v_4}$]{};
\vertex (v5) at (0,2.5) []{~~$G$\phantom{a}};
\path 
(v1) edge (u)
(u) edge (v2)
(u) edge (w)
(v2) edge (v4)
(v1) edge (v3)
(v3) edge (w)
(w) edge (v4)
(v1) edge (v5)
(v2) edge (v5)
(v3) edge (v5)
(v4) edge (v5);
\end{tikzpicture}
\]

We notice that the maximal independent sets of $H_G$ have the form
\[
\{v_1,v_4\}, \{v_2,v_3\}, \{v_1,v_2,w\}, \{v_3,v_4,u\}, \{u\} \cup M_i, \{w\} \cup M_i 
\] 
where $M_i$ is a maximal independent set of $G$. 

We let $n$ be the number of maximal independent sets of $G$, and for any maximal independent set, $M_i$, of $G$ we define $R_{M_i}$ to be the $m \times 1$ matrix:
\[
\begin{pmatrix}
a_1 & a_2 & \dots & a_{m}
\end{pmatrix}
\]
where $a_j = -1$ if $x_j \in M_i$ and $a_j = 0$ otherwise. Using $\mathbf{0}_m$ for the $m \times 1$ matrix with all $0$ entries, we get that the associated matrix of $H_G$ is:
\[
\begin{pmatrix}
1 & -1 & -1 & 1 & 0 & 0 & \mathbf{0}_m \\
0 & -1 & 0 & 1 & 0 & -1 & \mathbf{0}_m \\
1 & 0 & -1 & 0 & -1 & 0 & \mathbf{0}_m \\
1 & 0 & 0 & 1 & -1 & 0 & R_{M_1} \\
1 & 0 & 0 & 1 & 0 & -1 & R_{M_1} \\
1 & 0 & 0 & 1 & -1 & 0 & R_{M_2} \\
1 & 0 & 0 & 1 & 0 & -1 & R_{M_2} \\
\vdots & \vdots & \vdots & \vdots  & \vdots  & \vdots  & \vdots  \\
1 & 0 & 0 & 1 & -1 & 0 & R_{M_n} \\
1 & 0 & 0 & 1 & 0 & -1 & R_{M_n} 
\end{pmatrix}
\]

By performing row-reduction, we obtain the Hermite normal form of $A_{H_G}$:
\[
\begin{pmatrix}
1 & 0 & 0 & 1 & 0 & 1 & R_{M_1} \\
0 & 1 & 0 &-1 & 0 & 1 & \mathbf{0}_m \\
0 & 0 & 1 & 1 & 0 & 0 & R_{M_1} \\
0 & 0 & 0 & 0 & 1 & 1 & \mathbf{0}_m \\
0 & 0 & 0 & 0 & 0 & 2 & \mathbf{0}_m \\
0 & 0 & 0 & 0 & 0 & 0 & M_{n} - M_{1} \\
\vdots & \vdots & \vdots & \vdots  & \vdots  & \vdots  & \vdots  \\
0 & 0 & 0 & 0 & 0 & 0 & M_{n} - M_{n-1} \\
\end{pmatrix}
\]
We can see that the fifth row of this matrix will have only $0$ entries over a field with characteristic 2, and that the rows below are equivalent to the associated matrix $A_G$.  Hence, the claimed formula for $wcdim(H_G,\mathbb{F})$ is proven.
\end{proof} 

We now take a look at graphs that will have characteristic-dependent well-covered dimension, for odd primes.

\begin{theorem}\label{thmwcdimp}
Let $n\in \mathbb{N}$,  $n\geq 2$. There exists a graph $G$ on $2n+4$ vertices such that: 	
\[
wcdim(G,\mathbb{F}) = \left\{
\begin{array}{cc}
1 & \text{ if } char(\mathbb{F}) \nmid (2n-1) \\
2 & \text{ if } char(\mathbb{F}) \mid (2n-1)
\end{array}
\right.
\]
\end{theorem}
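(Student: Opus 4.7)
The plan is to exhibit an explicit graph $G$ on $2n+4$ vertices and compute $wcdim(G,\mathbb{F})$ directly via the rank of the associated matrix $A_G$. Guided by the small examples $G_8$ and $G_{10}$ of Section \ref{secexamples} (which correspond to $n=2$ and $n=3$), and by the gadget used in Theorem \ref{themGK2upG}, I would define $G$ to consist of a symmetric ``core'' of $2n$ vertices arranged into two interchangeable halves together with a four-vertex attachment calibrated so that the equations coming from maximal independent sets exhibit a transparent dependence on the integer $2n-1$. In particular, the interchangeable halves should produce an $O(n)$-sized family of maximal independent sets indexed by the core, while the four-vertex attachment should contribute a constant number of exceptional maximal independent sets that interact with the family via the coefficient $2n-1$.

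The second step is to enumerate all maximal independent sets of $G$. Exploiting the symmetry of the construction, these should fall into a short list of classes: a bounded number of exceptional sets involving the attachment, together with the $O(n)$-sized family indexed by the core. Completeness of the enumeration is verified by a case analysis on how a candidate maximal independent set intersects the attachment and each half of the core; in each case the set is either present in the list or admits an extension.

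The third step is to set up $A_G$ and perform row reduction. The rows indexed by the $O(n)$-family should collapse under successive differences into a handful of summary rows; combined with the rows contributed by the attachment, the resulting row-reduced matrix should have all pivots equal to $\pm 1$ except for one pivot equal to $\pm(2n-1)$ times a unit vector. Thus $rank(A_G) = 2n+3$ when $char(\mathbb{F}) \nmid (2n-1)$ and $rank(A_G) = 2n+2$ when $char(\mathbb{F}) \mid (2n-1)$, and the identity $wcdim(G,\mathbb{F}) = |V(G)| - rank(A_G)$ yields the claimed values $1$ and $2$.

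The main obstacle will be calibrating the construction so that the row-reduced matrix genuinely has the distinguished pivot $\pm(2n-1)$, rather than a nearby but inequivalent integer such as $n$, $n-2$, or $2n+1$; each of these would give a qualitatively similar but quantitatively wrong statement, and related constructions based on crown graphs or prisms indeed produce such alternative coefficients. Once the correct graph is identified, the enumeration of maximal independent sets and the subsequent Gaussian elimination are a direct, if somewhat tedious, calculation.
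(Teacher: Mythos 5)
There is a genuine gap: you never actually exhibit the graph. Theorem \ref{thmwcdimp} is an existence statement, so the entire mathematical content of a proof is the explicit construction together with a verification; your proposal describes only the shape of the verification (enumerate maximal independent sets, row-reduce $A_G$, find a pivot equal to $2n-1$) while leaving the construction itself as an unspecified ``symmetric core plus four-vertex attachment.'' You even name this as the main obstacle — calibrating the graph so that the distinguished pivot is $2n-1$ rather than $n$, $n-2$, or $2n+1$ — which is precisely the step a proof must supply. Without a concrete graph there is nothing to enumerate and nothing to row-reduce, so the argument cannot be completed or checked as written.

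For comparison, the paper's proof takes $G_n$ to be the disjoint union of $K_{n+2}$ (on $\{y_1,\dots,y_n,v_1,v_2\}$) and $K_{2,n}$ (with $2$-vertex part $\{w_1,w_2\}$ and $n$-vertex part $\{u_1,\dots,u_n\}$), with the additional edges $v_1w_1$, $v_2w_2$, and $y_iu_j$ for all $i\neq j$. This gives exactly $2n+4$ maximal independent sets ($\{y_i,w_1,w_2\}$, $\{y_i,u_i\}$, $\{v_1,w_2\}$, $\{v_2,w_1\}$, $\{v_1,u_1,\dots,u_n\}$, $\{v_2,u_1,\dots,u_n\}$), and the row reduction produces a single pivot $2n-1$ in the $w_2$ column because the set $\{v_2,u_1,\dots,u_n\}$ contributes a row whose $u$-block is all $-1$'s; eliminating that block against the $n$ rows coming from the $\{y_i,u_i\}$ family injects a coefficient $n$, which combines with an $n-1$ to give $2n-1$. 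Your intuition that the family of maximal independent sets indexed by the core interacts with a constant number of exceptional sets through an $n$-dependent coefficient is exactly right, and your reading of $G_8$ and $G_{10}$ as the cases $n=2$ and $n=3$ is consistent with the orders and characteristics involved; but identifying the correct graph is the theorem, and it is missing.
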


\begin{proof}
Let $G_n$ be the graph constructed from the disjoint union of $K_{n+2}$ and $K_{2,n}$, where $V(K_{n+2}) =\{y_1,y_2,\dots,y_n,v_1,v_2\}$, $V(K_{2,n})=\{w_1,w_2,u_1,u_2,\dots,u_n\}$, and $\{w_1, w_2\}$ is the $2$-vertex part of $K_{2,n}$.  To the edges already present in $K_{n+2}$ and $K_{n,2}$ we add the edges $v_1w_1$, $v_2w_2$, and $y_iu_j$,  for all $i\neq j \in \{1,\dots,n-2\}$.  \\
The following figure shows what $G_3$ looks like.
\[
\begin{tikzpicture}[x=.3in, y=.3in]
\vertex (v1) at (-1,-1) [label=left:$y_1$]{};
\vertex (v2) at (-2,0.5) [label=left:${y_2}$]{};
\vertex (v3) at (-1,2) [label=above:$y_3$]{};
\vertex (v4) at (0,-1) [label=below:$v_2$]{};
\vertex (v5) at (0,2) [label=above:${v_1}$]{};
\vertex (v6) at (2,2) [label=above:$w_1$]{};
\vertex (v7) at (3,0.5) [label=right:${u_2}$]{};
\vertex (v8) at (3,-1) [label=below:${u_1}$]{};
\vertex (v9) at (3,2) [label=above:$u_3$]{};
\vertex (v10) at (2,-1) [label=below:$w_{2}$]{};
\path 
(v1) edge (v2)
(v1) edge (v3)
(v1) edge (v4)
(v1) edge (v5)
(v2) edge (v3)
(v2) edge (v4)
(v2) edge (v5)
(v3) edge (v4)
(v3) edge (v5)
(v4) edge (v5)
(v4) edge (v10)
(v5) edge (v6)
(v7) edge (v1)
(v7) edge (v3)
(v8) edge (v2)
(v8) edge (v3)
(v9) edge (v1)
(v9) edge (v2)
(v6) edge (v7)
(v6) edge (v8)
(v6) edge (v9)
(v10) edge (v7)
(v10) edge (v8)
(v10) edge (v9);
\end{tikzpicture}
\]

It is easy to see that $G_n$ has exactly $2n+4$ maximal independent sets. They are:
\begin{enumerate}
\item[(i)] $M_i = \{y_i, w_1, w_2\}$, for all $i \in \{1,\dots,n\}$, 
\item[(ii)]  $N_i = \{y_i, u_i\}$, for all $i \in \{1,\dots,n\}$, 
\item[(iii)]  $L_1=\{v_1, w_2\}$, 
\item[(iv)]  $L_2 = \{v_2, w_1\}$, 
\item[(v)] $K_1 = \{v_1, u_1,\dots,u_n\}$, and 
\item[(vi)] $K_2 = \{v_2, u_1,\dots,u_n\}$.
\end{enumerate}

These maximal independent sets yield the following $2n+3$ linear equations:
\[
\sum\nolimits_{v \in M_i} x_v = \sum\nolimits_{v \in M_{i+1}} x_v \hspace{1in}  \sum\nolimits_{v \in N_i} x_v = \sum\nolimits_{v \in N_{i+1}} x_v 
\]
for $i=1,\ldots,n-1$, and
\begin{align*}
\sum\nolimits_{v \in M_n} x_v &= \sum\nolimits_{v \in N_1} x_v \\
\sum\nolimits_{v \in N_n} x_v &= \sum\nolimits_{v \in L_1} x_v \\
\sum\nolimits_{v \in L_1} x_v &= \sum\nolimits_{v \in L_2} x_v \\
\sum\nolimits_{v \in L_2} x_v &= \sum\nolimits_{v \in K_1} x_v \\
\sum\nolimits_{v \in K_1} x_v &= \sum\nolimits_{v \in K_2} x_v
\end{align*} 
where $x_v$ denotes the weight of $v\in G(V)$ under a weighting.

In order to describe $A_{G_n}$ we need some notation. We will write $M_1-M_2$ to denote the row given by the equation $\sum\nolimits_{v \in M_1} x_v - \sum\nolimits_{v \in M_2} x_v = 0$. We will follow a similar notation for all other rows. Also, we let $M$ to be the following $(n-1)\times n$ matrix
\[
M=
\left(
\begin{array}{cccccc}
 1 & -1 & 0 & \dots & 0 & 0  \\
0 & 1 & -1 & \dots & 0 & 0  \\
\vdots & &\ddots   & \ddots &  &  \vdots  \\
 0 & 0 & 0 & \dots & -1 & 0  \\
0 & 0 & 0 & \dots & 1 & -1  \\
\end{array}
\right)
\]

We would like to clarify to the reader that  we are placing labels around the matrix, they are there to show were each entry in the matrix comes from. We get:
\[
\begin{array}{rc}
                   & \begin{array}{ccc|ccc|cccc}   y_1   & \dots \ & y_n  & u_1 \ & \dots \  & u_n \ & v_1 & v_2 & w_1 & w_2\end{array}\\
                   & \\
\begin{array}{r}
M_1-M_2  \\
\vdots \hspace{.3in} \\ 
M_{n-1}-M_n \\
\hline 
N_n - L_1  \\ 
\hline 
M_n-N_1 \\
\hline 
N_1-N_2 \\
\vdots \hspace{.3in} \\ 
N_{n-1}-N_n \\
\hline 
L_1 - L_2 \\
L_2 - K_1 \\
K_1- K_2 
\end{array} & 
\left(
\begin{array}{ccc|ccc|cccc}
 & \phantom{\dots} &  &  & \phantom{\dots} &  &  & \phantom{\dots} &  \\
\phantom{\vdots} & M &  &  & 0 &  &  & & 0 &  \\
 &  &  &  &  &  &  & &  \\
\hline
0 & \mathbf{0}_{n-2} & 1 &  0 & \mathbf{0}_{n-2}  & 1 & -1 & 0 & 0 & -1 \\
\hline
-1 & \mathbf{0}_{n-2} & 1 & -1 & \mathbf{0}_{n-2}  & 0 & 0 & 0 & 1 & 1  \\
\hline
\phantom{u_n} &  &  \phantom{u_n} &  &  &  &  &  &  &  \\
\phantom{\vdots} & M &  &  & M &  &  &  & 0 & \\
 &  &  &  &  &  &  &  &  &  \\
\hline
 &  &  &  & \mathbf{0}_{n} &  & 1 & -1 & -1 & 1\\
 & 0 &  &  & (-1)_n &  & -1 & 1 & 1 & 0 \\
 &  &  &  & \mathbf{0}_{n} &  & 1 & -1 & 0 & 0 
\end{array}
\right)
\end{array}
\]

We now subtract the first row-block from the fourth. We also notice that the sum of the rows in $M$ add up to $(1, \mathbf{0}_{n-2}, -1)$. So, we add all the rows in the first row-block to the row $M_n-N_1$, and we get
\[
\begin{array}{rc}
                   & \begin{array}{ccc|ccc|cccc}   y_1   & \dots \ & y_n  & u_1 \ & \dots \  & u_n \ & v_1 & v_2 & w_1 & w_2\end{array}\\
                   & \\
\begin{array}{r}
M_1-M_2  \\
\vdots \hspace{.3in} \\ 
M_{n-1}-M_n \\
\hline 
N_n - L_1  \\ 
\hline 
M_n-N_1 \\
\hline 
N_1-N_2 \\
\vdots \hspace{.3in} \\ 
N_{n-1}-N_n \\
\hline 
L_1 - L_2 \\
L_2 - K_1 \\
K_1- K_2 
\end{array} & 
\left(
\begin{array}{ccc|ccc|cccc}
 & \phantom{\dots} &  &  & \phantom{\dots} &  &  & \phantom{\dots} &  \\
\phantom{\vdots} & M &  &  & 0 &  &  & & 0 &  \\
 &  &  &  &  &  &  & &  \\
\hline
0 & \mathbf{0}_{n-2} & 1 &  0 & \mathbf{0}_{n-2}  & 1 & -1 & 0 & 0 & -1 \\
\hline
 & \mathbf{0}_{n} &  & -1 & \mathbf{0}_{n-2}  & 0 & 0 & 0 & 1 & 1  \\
\hline
\phantom{u_n} &  &  \phantom{u_n} &  &  &  &  &  &  &  \\
\phantom{\vdots} & 0 &  &  & M &  &  &  & 0 & \\
 &  &  &  &  &  &  &  &  &  \\
\hline
 &  &  &  & \mathbf{0}_{n} &  & 1 & -1 & -1 & 1\\
 & 0 &  &  & (-1)_n &  & -1 & 1 & 1 & 0 \\
 &  &  &  & \mathbf{0}_{n} &  & 1 & -1 & 0 & 0 
\end{array}
\right)
\end{array}
\]

Since the sum of the rows in $M$ add up to $(1, \mathbf{0}_{n-2}, -1)$, we add all the rows in the third row-block to the row $N_n-L_1$, and we also add the row $M_n-N_1$. We get.
\[
\begin{array}{rc}
                   & \begin{array}{ccc|ccc|cccc}   y_1  \ & \dots \ & y_n \ & u_1 \ & \dots \  & u_n \ & v_1 & v_2 & w_1 & w_2\end{array}\\
                   & \\
\begin{array}{r}
M_1-M_2  \\
\vdots \hspace{.3in} \\ 
M_{n-1}-M_n \\
\hline 
N_n - L_1  \\ 
\hline 
M_n-N_1 \\
\hline 
N_1-N_2 \\
\vdots \hspace{.3in} \\ 
N_{n-1}-N_n \\
\hline 
L_1 - L_2 \\
L_2 - K_1 \\
K_1- K_2 
\end{array} & 
\left(
\begin{array}{ccc|ccc|cccc}
 & \phantom{\dots} &  &  & \phantom{\dots} &  &  & \phantom{\dots} &  \\
\phantom{\vdots} & M &  &  & 0 &  &  & & 0 &  \\
 &  &  &  &  &  &  & &  \\
\hline
0 & \mathbf{0}_{n-2} & 1 &  & \mathbf{0}_{n}  &  & -1 & 0 & 1 & 0 \\
\hline
 & \mathbf{0}_{n} &  & -1 & \mathbf{0}_{n-2}  & 0 & 0 & 0 & 1 & 1  \\
\hline
\phantom{u_n} &  &  \phantom{u_n} &  &  &  &  &  &  &  \\
\phantom{\vdots} & 0 &  &  & M &  &  &  & 0 & \\
 &  &  &  &  &  &  &  &  &  \\
\hline
 &  &  &  & \mathbf{0}_{n} &  & 1 & -1 & -1 & 1\\
 & 0 &  &  & (-1)_n &  & -1 & 1 & 1 & 0 \\
 &  &  &  & \mathbf{0}_{n} &  & 1 & -1 & 0 & 0 
\end{array}
\right)
\end{array}
\]

It is not hard to see that, after row-reduction, blocks $(1,1)$ and $(2,1)$ yield an identity matrix, which given the zeros on blocks $(4,1)$ and $(5,1)$ means that we have maximal rank there. Thus, we do not care about what is on block $(1,3)$.

Now consider blocks $(3,2)$ and $(4,2)$. We see that we can add the top row of this double-block to the one right below to get a $1$ cancelled out. After that we can add this modified row to the one right below, producing the same effect. In this way blocks  $(3,2)$ and $(4,2)$ are row-reduced to a negative identity. Now, these row-reductions affect also block $(4,3)$. We get
\[
\begin{array}{rc}
                   & \begin{array}{ccc|ccc|cccc}  \ \   y_1  & \dots & y_n  & u_1  & \dots   & u_n  &  \ v_1 \ \ \ \ & v_2 \ \ \ \ & w_1 \ \ \ \ & w_2 \ \ \ \ \end{array}\\
                   & \\
\begin{array}{r}
M_1-M_2  \\
\vdots \hspace{.3in} \\ 
M_{n-1}-M_n \\
N_n - L_1  \\ 
\hline 
M_n-N_1 \\
N_1-N_2 \\
\vdots \hspace{.3in} \\ 
N_{n-1}-N_n \\
\hline 
L_1 - L_2 \\
L_2 - K_1 \\
K_1- K_2 
\end{array} & 
\left(
\begin{array}{ccc|ccc|cccc}
 & \phantom{\dots} &  &  & \phantom{\dots} &  &  & \phantom{\dots} &  \\
\phantom{\vdots} & I_n &  &  & 0 &  &  & ? & ? &  \\
 &  &  &  &  &  &  & &  \\
 &  &  &  &  &  &  & &  &  \\
\hline
 &  &  &  &   &  &  &  &  &   \\
\phantom{u_n} &  &  \phantom{u_n} &  &  &  &  &  &  &  \\
\phantom{\vdots} & 0 &  & \ \  & -I_n &  \   & (\mathbf{0}_{n})^t & (\mathbf{0}_{n})^t & (\mathbf{1}_{n})^t & (\mathbf{1}_{n})^t \\
 &  &  &  &  &  &  &  &  &  \\
\hline
 &  &  &  & \mathbf{0}_{n} &  & 1 & -1 & -1 & 1\\
 & 0 &  &  & (-1)_n &  & -1 & 1 & 1 & 0 \\
 &  &  &  & \mathbf{0}_{n} &  & 1 & -1 & 0 & 0 
\end{array}
\right)
\end{array}
\]

We now subtract all the rows of the second row-block from row $L_2-K_1$. We also add row $K_1-K_2$ to $L_2-K_1$, and subtract row $K_1-K_2$ from $L_1-L_2$. We get
\[
\begin{array}{rc}
                   & \begin{array}{ccc|ccc|cccc}  \ \   y_1  & \dots & y_n  & u_1  & \dots   & u_n  &  \ v_1 \ \ \ \ & v_2 \ \ \ \ & w_1 \ \ \ \ & w_2 \ \ \ \ \end{array}\\
                   & \\
\begin{array}{r}
M_1-M_2  \\
\vdots \hspace{.3in} \\ 
M_{n-1}-M_n \\
N_n - L_1  \\ 
\hline 
M_n-N_1 \\
N_1-N_2 \\
\vdots \hspace{.3in} \\ 
N_{n-1}-N_n \\
\hline 
L_1 - L_2 \\
L_2 - K_1 \\
K_1- K_2 
\end{array} & 
\left(
\begin{array}{ccc|ccc|cccc}
 & \phantom{\dots} &  &  & \phantom{\dots} &  &  & \phantom{\dots} &  \\
\phantom{\vdots} & I_n &  &  & 0 &  &  & ? & ? &  \\
 &  &  &  &  &  &  & &  \\
 &  &  &  &  &  &  & &  &  \\
\hline
 &  &  &  &   &  &  &  &  &   \\
\phantom{u_n} &  &  \phantom{u_n} &  &  &  &  &  &  &  \\
\phantom{\vdots} & 0 &  & \ \  \ & -I_n &  \  \ & (\mathbf{0}_{n})^t & (\mathbf{0}_{n})^t & (\mathbf{1}_{n})^t & (\mathbf{1}_{n})^t \\
 &  &  &  &  &  &  &  &  &  \\
\hline
 &  &  &  &  &  & 0 & 0 & -1 & -1\\
 & 0 &  &  & 0 &  & 0 & 0 & n & n-1 \\
 &  &  &  &  &  & 1 & -1 & 0 & 0 
\end{array}
\right)
\end{array}
\]

Finally, adding ($n$ times) row $L_1-L_2$ to $L_2-K_1$ we get
\[
\begin{array}{rc}
                   & \begin{array}{ccc|ccc|cccc}  \ \   y_1  & \dots & y_n  & u_1  & \dots   & u_n  &  \ v_1 \ \ \ \ & v_2 \ \ \ \ & w_1 \ \ \ \ & w_2 \ \ \ \ \end{array}\\
                   & \\
\begin{array}{r}
M_1-M_2  \\
\vdots \hspace{.3in} \\ 
M_{n-1}-M_n \\
N_n - L_1  \\ 
\hline 
M_n-N_1 \\
N_1-N_2 \\
\vdots \hspace{.3in} \\ 
N_{n-1}-N_n \\
\hline 
L_1 - L_2 \\
L_2 - K_1 \\
K_1- K_2 
\end{array} & 
\left(
\begin{array}{ccc|ccc|cccc}
 & \phantom{\dots} &  &  & \phantom{\dots} &  &  & \phantom{\dots} &  \\
\phantom{\vdots} & I_n &  &  & 0 &  &  & ? & ? &  \\
 &  &  &  &  &  &  & &  \\
 &  &  &  &  &  &  & &  &  \\
\hline
 &  &  &  &   &  &  &  &  &   \\
\phantom{u_n} &  &  \phantom{u_n} &  &  &  &  &  &  &  \\
\phantom{\vdots} & 0 &  & \ \  \ & -I_n &  \  \ & (\mathbf{0}_{n})^t & (\mathbf{0}_{n})^t & (\mathbf{1}_{n})^t & (\mathbf{1}_{n})^t \\
 &  &  &  &  &  &  &  &  &  \\
\hline
 &  &  &  &  &  & 0 & 0 & -1 & -1\\
 & 0 &  &  & 0 &  & 0 & 0 & 0 & 2n-1 \\
 &  &  &  &  &  & 1 & -1 & 0 & 0 
\end{array}
\right)
\end{array}
\]

We can see that this matrix has full rank $(2n+3)$ over a field with characteristic $p \nmid (2n-1)$ and that the previous to last row becomes a row of all zeroes over a field with characteristic $p \mid (2n-1)$. So, since $G_n$ has $(2n+4)$ vertices, we get
\[
wcdim(G_n,\mathbb{F}) = |V(G_n)| - rank(A_{G_n}) = (2n+4) - (2n+3) = 1 \text{ if } char(\mathbb{F}) \nmid (2n-1), 
\]
and
\[
wcdim(G_n,\mathbb{F}) = |V(G_n)| - rank(A_{G_n}) = (2n+4) - (2n+2) = 2 \text{ if } char(\mathbb{F}) \mid (2n-1).
\]
which is what we wanted to show.
\end{proof}

\begin{corollary}
Let $p$ be a prime. For every $n\geq p+5$ there is a graph $G$ of order $n$ for which $wcdim(G, \mathbb{F})$ takes different values depending on whether $char(\mathbb{F})$ is $p$ or not. 
\end{corollary}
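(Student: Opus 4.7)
The plan is to split into cases based on the prime $p$ and combine the explicit constructions from Theorem \ref{themGK2upG} and Theorem \ref{thmwcdimp} with the order-inflation procedure of Lemma \ref{lemlargergraphs}(2), which preserves $wcdim$ while increasing the number of vertices by any desired amount.

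For the case $p=2$, I would use the graph $G_7$ exhibited in Section \ref{secexamples}, which has order $7 = p+5$ and whose well-covered dimension changes precisely at characteristic $2$. For any $n \geq 7$, pick any vertex $v$ of $G_7$ and form $(G_7)_{v,n-6}$ as in Lemma \ref{lemlargergraphs}(2); this replaces $v$ by a clique on $n-6$ vertices, so the resulting graph has order $7 + (n-6) - 1 = n$ and the same well-covered dimension as $G_7$, which still depends on whether $char(\mathbb{F})=2$.

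For an odd prime $p$, I would apply Theorem \ref{thmwcdimp} with the parameter $n_0 = (p+1)/2$, which is a positive integer $\geq 2$ since $p \geq 3$. Then $2n_0 - 1 = p$, so the divisibility condition $char(\mathbb{F}) \mid (2n_0-1)$ holds exactly when $char(\mathbb{F}) = p$, and the theorem produces a graph $G_{n_0}$ of order $2n_0 + 4 = p+5$ whose well-covered dimension is $2$ when $char(\mathbb{F})=p$ and $1$ otherwise. For any $n > p+5$, I would again apply Lemma \ref{lemlargergraphs}(2) to a single vertex of $G_{n_0}$, replacing it by a clique of size $n - (p+5) + 1$ to reach order exactly $n$, with the well-covered dimension unchanged and hence still characteristic-dependent at $p$.

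There is essentially no obstacle here: the two theorems supply base graphs of order exactly $p+5$ for every prime $p$, and Lemma \ref{lemlargergraphs}(2) lets us climb to every larger order without changing the well-covered dimension. The only thing to verify carefully is that $n_0 = (p+1)/2 \geq 2$ for all odd primes $p$, which is immediate, and that the clique substitution produces the exact target order, which is a trivial arithmetic check.
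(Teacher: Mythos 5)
Your proposal is correct and matches the paper's (one-line) proof in substance: the paper likewise derives the corollary by combining Theorem \ref{themGK2upG} (whose base case is exactly $G_7$, of order $7=2+5$), Theorem \ref{thmwcdimp} with $2n_0-1=p$ (order $2n_0+4=p+5$ for odd $p$), and Lemma \ref{lemlargergraphs} to inflate to every larger order. Your write-up simply supplies the arithmetic details the paper leaves implicit, and these check out.
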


\begin{proof}
This is an immediate corollary of Lemma \ref{lemlargergraphs} and Theorems \ref{themGK2upG} and \ref{thmwcdimp}.
\end{proof}

The previous theorem and corollary can be used to construct the smallest known graphs which have characteristic-dependent well-covered dimension for any given characteristic. Based upon the many graphs generated by our Sage script, we speculate that these graphs constructed using the previous theorem are the smallest graphs that exist with this property. We leave this as an open problem.

The hope is that having found potentially the smallest graphs with characteristic-dependent well-covered dimension for a given characteristic $p$, we can more easily find other properties of the graphs which cause the well-covered dimension of the graph to be characteristic-dependent. We believe this is also a very interesting open problem.


\end{document}